\theoremstyle{definition}
\newtheorem{Def}{Definition}[section]
\newtheorem{Thm}[Def]{Theorem}
\newtheorem{Prop}[Def]{Proposition}
\newtheorem{Rem}[Def]{Remark}
\newtheorem{Lem}[Def]{Lemma}
\numberwithin{equation}{section}
\begin{document}

\title{On $p$-adic quaternionic Eisenstein series}

\author{Toshiyuki Kikuta and Shoyu Nagaoka}
\maketitle






\maketitle

\noindent 
{\bf Mathematics subject classification}: Primary 11F33 $\cdot$ Secondary 11F55\\
\noindent
{\bf Key words}: $p$-adic Eisenstein series, quaternionic modular forms

\begin{abstract}
We show that certain $p$-adic Eisenstein series for quaternionic
modular groups of degree 2 become ``real" modular forms of level $p$ in almost all cases.
To prove this, we introduce a $U(p)$ type operator. We also show that there 
exists a $p$-adic Eisenstein series of the above type that has transcendental coefficients. 
Former examples of $p$-adic Eisenstein series for Siegel and Hermitian modular groups are both rational (i.e., algebraic).
\end{abstract}

\section{Introduction}
Serre \cite{Se} first developed the theory of $p$-adic Eisenstein series and there have subsequently been many results in the field of $p$-adic modular forms. 
Several researchers have attempted to generalize the theory to modular forms with several variables. For example, we showed that a $p$-adic
limit of a Siegel Eisenstein series becomes a ``real" Siegel 
modular form (cf. \cite{KN}). The same result has also been proved for Hermitian 
modular forms (e.g., \cite{N}).

In the present paper, we study $p$-adic limits of quaternionic Eisenstein series. This study has two principal aims. 
The first is to show that these $p$-adic limits become ``real" modular forms of level $p$ for higher 
$p$-adical weights (Theorem \ref{ThmM}). 
To prove this, we introduce a $U(p)$ type Hecke operator and study its properties; this is 
a similar method to that used by B\"ocherer for Siegel modular forms \cite{Bo}. The second aim is 
to show that a strange phenomenon occurs for low $p$-adical weights; namely, there exists a transcendental $p$-adic Eisenstein series in the quaternionic case (Theorem \ref{ThmM2}).


\section{Preliminaries}
\subsection{Notation and definitions}	
Let $\mathbb{H}$ be Hamiltonian quaternions and $\mathcal{O}$ the Hurwitz order (cf. \cite{Kri2}). The half-space of quaternions of degree $n$ is defined as 
$$
H(n;\mathbb{H}):=\{\; Z=X+iY\;|\; X,\,Y\in Her_n(\mathbb{H}),\;Y>0\;\}. 
$$
Let $J_n:=\begin{pmatrix}O_n & 1_n \\ -1_n & O_n\end{pmatrix}$. Then, the group of symplectic similitudes
$$
\left\{\;M\in M(2n,\mathbb{H})\;|\; {}^t\overline{M}J_nM=qJ_n\ {\rm for\ some\ positive\ }q\in \mathbb{R}\;\right\}
$$
acts on $H(n;\mathbb{H})$ by 
$$
Z\longmapsto M\langle Z\rangle =(AZ+B)(CZ+D)^{-1},\quad M=\begin{pmatrix}A & B \\ C & D \end{pmatrix}.
$$
Let $\varGamma_n$ denote the modular group of quaternions of degree $n$ defined by 
\begin{align*}
&G_n:=\left\{\;M\in M(2n,\mathbb{H})\;|\; {}^t\overline{M}J_nM=J_n\;\right\}, \\
&\varGamma_n:=\varGamma_n(\mathcal{O})=M(2n,\mathcal{O})\cap G_n.
\end{align*}
For a given $q\in \mathbb{N}$, the congruence subgroup 
$\varGamma _0 ^{(n)}(q)$ of $\varGamma_n$ is defined by 
$$
\varGamma _0^{(n)}(q):=\left\{ \begin{pmatrix} A & B \\ C & D \end{pmatrix}\in 
\varGamma_n \;|\; C\equiv O_n \bmod{qM(n,\mathcal{O})} \right\}.
$$
In this subsection, $\varGamma$ always denotes either $\varGamma _n$ or 
$\varGamma _0^{(n)}(q)$. 

Let $1=e_1$, $e_2$, $e_3$, $e_4$ denote the canonical basis of $\mathbb{H}$,
which is characterized by the identities
$$
e_4=e_2e_3=-e_3e_2,\quad e_2^2=e_3^2=-1.
$$
We consider the canonical isomorphism
\[
M(n,\mathbb{H}) \longrightarrow M(2n,\mathbb{C})
\]
given by $\overset{\vee}{A}=(\overset{\vee}{a}_{ij})$, where
$\overset{\vee}{a}=\begin{pmatrix}a_1+a_2i&a_3+a_4i\\-a_3+a_4i&a_1-a_2i\end{pmatrix}$,
if $a=a_1e_1+a_2e_2+a_3e_3+a_4e_4$
(cf. \cite{Kri2}).
We use the above isomorphism to define $\det(A)$ for $A\in M(n,\mathbb{H})$.
For a similitude $M=\begin{pmatrix} A & B \\ C & D \end{pmatrix}$ and
a function $f:\,H(n;\mathbb{H})\longrightarrow \mathbb{C}$,
we define the slash operator $\mid_k$ by 
$$
(f|_kM)(Z)=\det(M)^{\frac{k}{2}}\det(CZ+D)^{-k}f((AZ+B)(CZ+D)^{-1}). 
$$
A holomorphic function $f:\;H(n;\mathbb{H})\longrightarrow \mathbb{C}$ is called a
{\it quaternionic modular form of degree n and weight k for} $\varGamma $ if $f$ satisfies
$$
(f|_kM)(Z)=f(Z),
$$
for all $M\in \varGamma$. (The cusp condition is required if $n=1$.)

We denote by $M_k(\varGamma)$ the $\mathbb{C}$-vector space of all 
quaternionic modular forms of degree $n$ and weight $k$ for $\varGamma$. 
A modular form $f\in M_k(\varGamma)$ possesses a 
Fourier expansion of the form
$$
f(Z)=\sum_{0\leq H\in Her_n^\tau (\mathcal{O})}a_f(H)e^{2\pi i\tau(H,Z)},\quad
Z\in H(n;\mathbb{H}),
$$
where $Her_n^\tau (\mathcal{O})$ denotes the dual lattice of $Her_n(\mathcal{O}):=
\{S\in M(n,\mathcal{O})\,|\,{}^t\overline{S}=S\}$ with respect to
the reduced trace form $\tau$ (cf. \cite{Kri2}). For simplicity, 
we put $q^H:=e^{2\pi i\tau(H,Z)}$ 
for $H\in Her_n^\tau (\mathcal{O})$. Using this notation, 
we write the above Fourier expansion simply as $f=\sum _Ha_f(H)q^H$.

For an even integer $k$, we consider the Eisenstein series
\begin{equation}
\label{Eisen}
E_k^{(n)}(Z):=\sum_{\binom{AB}{CD}\in\varGamma_{n0}\backslash\varGamma_n}
\det(CZ+D)^{-k},\quad Z\in H(n;\mathbb{H}),
\end{equation}
where $\varGamma_{n0}:=\left\{\begin{pmatrix} A & B \\ O_n & D \end{pmatrix}\in
\varGamma_n\right\}$. It is well known that this series belongs to $M_k(\varGamma_n)$ if $k>4n-2$. 
We call this series the {\it quaternionic Eisenstein series of degree n and weight k}.
 \subsection{Fourier coefficients of Eisenstein series}
 In this section, we introduce an explicit formula for the Fourier coefficients
 of the degree 2 quaternionic Eisenstein series obtained by
 Krieg (cf. \cite{Kri3}).

Let $k>6$ be an even integer and let
$$
 E_k^{(2)}(Z)=\sum_{0\leq H\in Her_2^\tau (\mathcal{O})}a_k(H)e^{2\pi i\tau (H,Z)}
$$
be the Fourier expansion of the degree $2$ quaternionic Eisenstein series $E_k^{(2)}$. 
According to \cite{Kri3}, we introduce an explicit formula for $a_k(H)$. Given 
$O_2\ne H\in Her_2^\tau (\mathcal{O})$, the ``greatest common divisor"
of $H$ is given by
$$
\varepsilon (H):=\text{max}\{d\in\mathbb{N}\;|\; d^{-1}H\in Her_2^\tau (\mathcal{O})\;\}.
$$
\begin{Thm}[Krieg \cite{Kri3}]
\label{Four}
\it{Let $k>6$ be even and $H\ne O_2$. Then, the Fourier
coefficient $a_k(H)$ is given by:
$$
a_k(H)=\sum_{0<d|\varepsilon (H)} d^{k-1}\alpha^*(2{\rm det}(H)/d^2)
$$
and
$$
\alpha^*(\ell)=
\begin{cases}\displaystyle
-\frac{2k}{B_k} & \text{if $\ell =0$},\\
\displaystyle
-\frac{4k(k-2)}{(2^{k-2}-1)\,B_k\,B_{k-2}}[\sigma_{k-3}(\ell)-2^{k-2}\sigma_{k-3}(\ell/4)]
& \text{if $\ell\in\mathbb{N}$},
\end{cases}
$$
where $B_m$ is the $m$-th Bernoulli number and
$$
\sigma_k(m):=
\begin{cases}
0 & \text{if $m\notin \mathbb{N}$},\\
\displaystyle
\sum_{0<d|m}d^k & \text{if $m\in\mathbb{N}$}.
\end{cases}
$$
}
\end{Thm}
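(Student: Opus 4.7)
The strategy is classical: compute the Fourier expansion of $E_k^{(2)}$ directly from its definition by unfolding the sum over $\varGamma_{2,0}\backslash\varGamma_2$ and identifying each Fourier coefficient with a local-global product of Siegel singular-series type. First I would parametrize the cosets by equivalence classes of coprime pairs $(C,D)$ of quaternionic matrices, stratified by the rank of $C$. The rank-zero stratum contributes only to $H=0$, while the higher-rank strata contribute to the generic coefficients.

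For fixed $H\neq 0$, I would extract $a_k(H)$ by integrating $E_k^{(2)}(X+iY)\,e^{-2\pi i\tau(H,X)}$ over $X$ modulo the lattice $Her_2(\mathcal{O})$ and evaluating the archimedean integral by a Siegel-type identity, reducing the arithmetic content to a product of local densities $\alpha_p(H)$. Since the coefficient depends on $H$ only through its $\varGamma_2$-orbit, each $\alpha_p$ depends on $H$ only via its local invariants; for a rank-$2$ half-integral quaternionic Hermitian matrix, these are captured by the content $\varepsilon(H)$ and the content-free discriminant $2\det(H)/\varepsilon(H)^2$. Factoring the content-dependent pieces out of each Euler factor yields the outer divisor sum $\sum_{d\mid\varepsilon(H)}d^{k-1}\alpha^*(2\det(H)/d^2)$.

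It remains to compute the primitive function $\alpha^*(\ell)$. The normalizing constants $-2k/B_k$ and $-4k(k-2)/\bigl((2^{k-2}-1)\,B_k\,B_{k-2}\bigr)$ should emerge from the special values of the Riemann zeta function at $1-k$ and of an associated Dirichlet series at $3-k$ (recall $B_k=-k\,\zeta(1-k)$), producing the $B_k B_{k-2}$ denominator; the factor $(2^{k-2}-1)^{-1}$ already signals that the prime $2$ is exceptional. At odd primes, the standard recursion for local densities of binary Hermitian forms produces the factor $\sigma_{k-3}(\ell)$, while the correction term $-2^{k-2}\sigma_{k-3}(\ell/4)$ comes from the prime $2$. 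The $\ell=0$ case is a degenerate local density that collapses to a single zeta value, giving $-2k/B_k$.

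The main obstacle is the explicit computation of the $2$-adic local density. The Hurwitz order $\mathcal{O}$ behaves non-trivially at $2$ (the element $\tfrac12(e_1+e_2+e_3+e_4)$ modifies the lattice structure in a way that makes $Her_2^\tau(\mathcal{O})$ at $2$ subtler than at odd primes), so the classification of half-integral quaternionic Hermitian matrices modulo powers of $2$, together with the resulting Igusa-type local integral, requires a separate careful analysis. All of the $2^{k-2}$ factors in the statement ultimately come from this step, and I expect it to be the sole place where the quaternionic case genuinely differs from the complex-Hermitian analogue.
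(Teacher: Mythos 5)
The paper does not prove this statement at all: it is quoted verbatim as a theorem of Krieg \cite{Kri3}, so there is no internal proof to measure you against. Your plan is, in any case, a genuinely different route from the one in the cited source. Krieg obtains the formula through the Maass space: he shows that $E_k^{(2)}$ lies in the space of Maass lifts, so that the relation $a_k(H)=\sum_{0<d\mid\varepsilon(H)}d^{k-1}\alpha^*(2\det(H)/d^2)$ is the Maass relation itself rather than something extracted from Euler factors, and the determination of $\alpha^*$ is reduced to a one-variable computation; the combination $\sigma_{k-3}(\ell)-2^{k-2}\sigma_{k-3}(\ell/4)$ is the $\ell$-th coefficient of an explicit weight $k-2$ Eisenstein series on $\Gamma_0(4)$, which is where the factor $(2^{k-2}-1)^{-1}$ and the exceptional role of the prime $2$ enter. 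Your unfolding/local-density approach would instead have to establish two nontrivial inputs that you currently only assert: (i) that the $GL_2(\mathcal{O})$-class of a positive semi-definite $H\in Her_2^\tau(\mathcal{O})$ is determined by $\varepsilon(H)$ and $\det(H)$ (a class-number-one type statement special to the Hurwitz order, needed before the outer divisor sum can even be written down); and (ii) the explicit $2$-adic local density, which you rightly identify as the crux but leave entirely open. As written, your text is a programme rather than a proof: it is a reasonable programme, but the step that produces every occurrence of $2^{k-2}$ in the statement is precisely the step you have not carried out, whereas the Maass-lift route disposes of it by reducing to classical facts about one-variable Eisenstein series.
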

\subsection{${\boldsymbol U(p)}$-operator}
In the remainder of this paper, we assume that $p$ is an odd prime. For a formal power series of the form 
$F=\sum _Ha_F(H)q^H$, we define a $U(p)$ type operator as 
$$
U(p):F=\sum _Ha_F(H)q^T\longmapsto F|U(p):=\sum _Ha_F(pH)q^H. 
$$
In particular, for a modular form $F\in M_k(\varGamma  _0^{(n)}(p))$, we may regard $U(p)$ as a Hecke operator
 (cf. \cite{Bo}, \cite{Kri3}). 
We prove this in this section. More precisely, we prove that
\begin{Prop}
\label{Prop1}
\it{If $F\in M_k(\varGamma  _0^{(n)}(p))$ then $F|U(p)\in M_k(\varGamma  _0^{(n)}(p))$.}
\end{Prop}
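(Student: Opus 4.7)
The strategy is to realize $U(p)$ as a geometric Hecke operator, i.e., to write
\[
F|U(p) = c\sum_{S}F\big|_{k} M_S,\qquad M_S:=\begin{pmatrix}1_n & S\\ 0 & p\cdot 1_n\end{pmatrix},
\]
where $S$ runs over a set of representatives of $Her_n(\mathcal{O})/p\cdot Her_n(\mathcal{O})$ and $c$ is an explicit scalar. First I would verify this identity by direct computation on Fourier expansions: the conventions give $F|_{k} M_S(Z)=p^{-nk}F((Z+S)/p)$, and summing the exponentials $e^{2\pi i\tau(H,S)/p}$ over $S$ is a Gauss-type sum which, by the dual-lattice definition of $Her_n^\tau(\mathcal{O})$, equals $|Her_n(\mathcal{O})/pHer_n(\mathcal{O})|$ if $H\in p\cdot Her_n^\tau(\mathcal{O})$ and $0$ otherwise, thereby picking out exactly the coefficients $a_F(pH')$.

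With this expression in hand, modularity of $F|U(p)$ reduces to the $\Gamma_0^{(n)}(p)$-invariance of the sum $\sum_S F|_k M_S$. For a given $\gamma=\bigl(\begin{smallmatrix}A & B\\ C & D\end{smallmatrix}\bigr)\in \Gamma_0^{(n)}(p)$, I would produce, for each coset representative $S$, a matching pair $(\gamma'_S, S')$ with $\gamma'_S\in\Gamma_0^{(n)}(p)$ and $S'$ again a representative, such that $M_S\gamma=\gamma'_S M_{S'}$; then invariance follows once $S\mapsto S'$ is shown to permute the representatives. A straightforward block-matrix calculation forces $A'=A+SC$, $C'=pC$, $D'=D-CS'$, and $pB'=B+SD-AS'-SCS'$, so everything hinges on choosing $S'$, Hermitian mod $p$, with $AS'\equiv B+SD\pmod{p}$.

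To solve for $S'$, I would exploit the symplectic relations $\overline{A}^{t}D=1_n+\overline{C}^{t}B$ and $\overline{B}^{t}D=\overline{D}^{t}B$. Since $C\equiv 0\pmod{p}$, the first gives $\overline{A}^{t}D\equiv 1_n$, hence $A$ is invertible modulo $p$ with $A^{-1}\equiv\overline{D}^{t}$; set $S':=A^{-1}(B+SD)\bmod p$. A short computation using $\overline{B}^{t}D\equiv\overline{D}^{t}B$ then shows $\overline{S'}^{t}\equiv S'\pmod{p}$, so $S'$ lifts to $Her_n(\mathcal{O})$; applying the same reasoning to $\gamma^{-1}\in\Gamma_0^{(n)}(p)$ shows $S\mapsto S'$ is a bijection of coset representatives. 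Integrality of $B'$ is precisely the mod-$p$ equation $B+SD\equiv AS'$ that defines $S'$, while symplecticity of $\gamma'_S$ is automatic from $M_S\gamma=\gamma'_S M_{S'}$ since $M_S,M_{S'}$ are similitudes of common multiplier $p$. The main obstacle will be carrying the non-commutative quaternionic arithmetic through these identities cleanly, especially verifying the Hermitian condition on $S'$ and making the coset-bijection argument watertight over the Hurwitz order rather than over $\mathbb{Z}$.
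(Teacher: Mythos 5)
Your proof is correct, and its mathematical core coincides with the paper's: your Gauss-sum evaluation of $\sum_S e^{2\pi i\tau(H,S)/p}$ via the dual-lattice definition is exactly the paper's Lemma \ref{Lem2}, and your key congruence $AS'\equiv B+SD \bmod p$, solved by $S'={}^t\overline{D}(B+SD)$ using ${}^t\overline{A}D\equiv 1_n$ and the Hermitian-ness of ${}^t\overline{D}B$ and ${}^t\overline{D}SD$, is precisely the computation in the paper's Lemma \ref{Lem1}. The difference is in the packaging. The paper decomposes the double coset of $J_n=\bigl(\begin{smallmatrix}O_n&-1_n\\1_n&O_n\end{smallmatrix}\bigr)$ into left cosets of $\gamma_T=\bigl(\begin{smallmatrix}O_n&-1_n\\1_n&T\end{smallmatrix}\bigr)$, factors $\gamma_T$ as the Fricke matrix times your $M_T$, identifies the resulting operator as $c\cdot W_p\circ U(p)$, and then deduces modularity from the general invariance of double coset operators together with $F|W_p\in M_k(\varGamma_0^{(n)}(p))$ (asserted ``by the usual way''). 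You instead work directly with the upper-triangular representatives $M_S=\bigl(\begin{smallmatrix}1_n&S\\O_n&p1_n\end{smallmatrix}\bigr)$ and verify by hand that right multiplication by $\gamma\in\varGamma_0^{(n)}(p)$ permutes the cosets $\varGamma_0^{(n)}(p)M_S$, i.e., you prove the left-coset decomposition of $\varGamma_0^{(n)}(p)\,\mathrm{diag}(1_n,p1_n)\,\varGamma_0^{(n)}(p)$ rather than that of the double coset of $J_n$. This buys self-containedness (no Fricke involution, no appeal to the general theory of Hecke double cosets) at the cost of checking explicitly that $\gamma'_S$ is integral, symplectic and congruent to the right shape mod $p$ and that $S\mapsto S'$ is a bijection; all of these checks go through as you describe, and the noncommutativity worry you flag is handled exactly as in the paper by the relations ${}^t\overline{B}D={}^t\overline{D}B$ and ${}^t\overline{A}D-{}^t\overline{C}B=1_n$.
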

To prove this proposition, we introduce the following lemma. 
\begin{Lem}
\label{Lem1}
\it{A complete set of representatives for the left cosets of 
$$
\varGamma  _0^{(n)}(p)\begin{pmatrix}O_n & -1_n \\ 1_n & O_n \end{pmatrix}\varGamma  _0^{(n)}(p) 
$$
is given by 
$$
\left\{ \begin{pmatrix}O_n & -1_n \\ 1_n & T \end{pmatrix}|\: T\in Her_n({\mathcal O})/pHer_n({\mathcal O}) \right\}. 
$$
}
\end{Lem}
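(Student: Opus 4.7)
The plan is to establish this double coset decomposition in three standard steps: containment of the proposed representatives in the double coset, disjointness of their left cosets, and exhaustion. Throughout, I write $J := \begin{pmatrix} O_n & -1_n \\ 1_n & O_n \end{pmatrix}$ and $\beta_T := \begin{pmatrix} O_n & -1_n \\ 1_n & T \end{pmatrix}$.

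The first two steps are direct computations. The factorization $\beta_T = J \cdot \begin{pmatrix} 1_n & T \\ O_n & 1_n \end{pmatrix}$ exhibits $\beta_T$ as an element of $J\cdot\varGamma_0^{(n)}(p)$, since the Hermitian condition on $T$ places the translation factor into $\varGamma_0^{(n)}(p)$. Using the quaternionic symplectic inverse formula $M^{-1} = -J_n\,{}^t\overline{M}J_n$ one computes $\beta_{T'}^{-1} = \begin{pmatrix} T' & 1_n \\ -1_n & O_n \end{pmatrix}$, and then $\beta_T \beta_{T'}^{-1} = \begin{pmatrix} 1_n & O_n \\ T'-T & 1_n \end{pmatrix}$; this lies in $\varGamma_0^{(n)}(p)$ if and only if $T \equiv T' \pmod{p\, Her_n(\mathcal{O})}$, which gives disjointness of the proposed cosets.

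The heart of the argument is exhaustion. For $\gamma = \gamma_1 J \gamma_2$ in the double coset, membership $\gamma \in \varGamma_0^{(n)}(p)\beta_T$ is equivalent to $J\gamma_2 \beta_T^{-1} \in \varGamma_0^{(n)}(p)$, since the factor $\gamma_1$ can be absorbed on the left. Writing $\gamma_2 = \begin{pmatrix} A & B \\ C & D \end{pmatrix}$ with $C \equiv O_n \pmod p$, a direct product gives
$$
J\gamma_2 \beta_T^{-1} = \begin{pmatrix} -CT + D & -C \\ AT - B & A \end{pmatrix},
$$
so the task reduces to producing a Hermitian $T \in Her_n(\mathcal{O})$ satisfying $AT \equiv B \pmod p$.

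The anticipated obstacle is producing such a $T$, and I would handle it by invoking the quaternionic symplectic relations for $\gamma_2$: expanding ${}^t\overline{\gamma_2} J_n \gamma_2 = J_n$ yields both ${}^t\overline{A}D - {}^t\overline{C}B = 1_n$ and ${}^t\overline{B}D = {}^t\overline{D}B$. The second relation makes the explicit candidate $T := {}^t\overline{D}B$ Hermitian and integral, hence an element of $Her_n(\mathcal{O})$. The first relation, reduced modulo $p$ (where $C$ vanishes), gives ${}^t\overline{A}D \equiv 1_n$, so $A$ is invertible modulo $p$ with $A^{-1} \equiv {}^t\overline{D}$. Consequently $AT = A\,{}^t\overline{D}\,B \equiv B \pmod p$, which verifies the required congruence and exhausts the double coset.
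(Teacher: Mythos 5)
Your proposal is correct and follows essentially the same route as the paper: both use the factorization $\beta_T = J\begin{pmatrix}1_n & T\\ O_n & 1_n\end{pmatrix}$ for one inclusion, and both reduce exhaustion to solving a congruence of the form $AS \equiv B \pmod{p}$ with $S$ Hermitian, solved by the same candidate $S={}^t\overline{D}B$ (the paper's $S={}^t\overline{D}(B+TD)$ specializes to yours) using the symplectic relations. Your explicit verification of disjointness via $\beta_T\beta_{T'}^{-1}=\begin{pmatrix}1_n & O_n\\ T'-T & 1_n\end{pmatrix}$ is a welcome detail the paper only asserts.
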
  
\begin{proof}[Proof of Lemma \ref{Lem1}] 
We set $\gamma _T:=\begin{pmatrix} O_n & -1_n \\ 1_n & T \end{pmatrix}$ and prove 
\begin{align*}
\varGamma  _0^{(n)}(p)\begin{pmatrix}O_n & -1_n \\ 1_n & O_n \end{pmatrix}\varGamma  _0^{(n)}(p) 
=\bigcup _{T\in Her_n({\mathcal O})/pHer_n({\mathcal O})}\varGamma _0^{(n)}(p) \gamma _T.  
\end{align*}
By decomposition 
\begin{equation}
\label{dec}
\begin{pmatrix}O_n & -1_n \\ 1_n & T \end{pmatrix}=\begin{pmatrix}O_n & -1_n \\ 1_n & O_n 
\end{pmatrix}\begin{pmatrix}1_n & T \\ O_n & 1_n \end{pmatrix},
\end{equation}
we easily see the inclusion 
\begin{equation}
\label{inc}
\varGamma  _0^{(n)}(p)\begin{pmatrix}O_n & -1_n \\ 1_n & O_n \end{pmatrix}\varGamma  _0^{(n)}(p) \supset 
\bigcup _{T\in Her_n({\mathcal O})/pHer_n({\mathcal O})}\varGamma  _0^{(n)}(p) \gamma _T.  
\end{equation}
We shall prove the converse inclusion. Note that $T\equiv T'$ mod $pHer_n({\mathcal O})$ if and only if 
$\varGamma  _0^{(n)}(p)\gamma _T=\varGamma  _0^{(n)}(p)\gamma _{T'}$. Hence, we have 
$$
\bigcup _{T\in Her_n({\mathcal O})/pHer_n({\mathcal O})}\varGamma  _0^{(n)}(p) \gamma _T=\bigcup _{T\in Her_n({\mathcal O})}\varGamma  _0^{(n)}(p)\gamma _T
$$
as a set. Again, by the decomposition (\ref{dec}), it suffices to show that, for any 
$\begin{pmatrix}A & B \\ C & D\end{pmatrix}\in \varGamma  _0^{(n)}(p)$, there exists $S\in Her_n({\mathcal O})$ such that 
$$
\begin{pmatrix}O_n & -1_n \\ 1_n & T \end{pmatrix}\begin{pmatrix}A & B \\ C & D \end{pmatrix}
\begin{pmatrix}O_n & -1_n \\ 1_n & S\end{pmatrix}^{-1}\in \varGamma  _0^{(n)}(p). 
$$
A direct calculation shows that
\begin{align*}
&\begin{pmatrix}O_n & -1_n \\ 1_n & T \end{pmatrix}\begin{pmatrix}A & B \\ C & D \end{pmatrix}\begin{pmatrix}O_n & -1_n \\ 1_n & S
 \end{pmatrix}^{-1}=\begin{pmatrix}-CS+D & -C \\ (A+TC)S-(B+TD) & A+TC \end{pmatrix}. 
\end{align*}
Hence, the proof is reduced to finding $S\in Her_n({\mathcal O})$ such that $AS\equiv B+TD$ mod 
$p\,M(n,{\mathcal O})$. 
Recall that $A{}^t\overline{D}-B{}^t\overline{C}=1_n$ and hence $A{}^t\overline{D}\equiv 1_n$ mod $p\,M(n,{\mathcal O})$. 
If we choose $S$ as $S:={}^t\overline{D}(B+TD)$, then $AS\equiv B+TD$ mod $p\,M(n,{\mathcal O})$. 
To complete the proof, we need to show that $S={}^t\overline{D}(B+TD)\in Her_n({\mathcal O})$. This assertion comes from the fact that 
${}^t\overline{D}B$, ${}^t\overline{D}TD\in Her _n({\mathcal O})$. 
\end{proof}
We now return to the proof of Proposition \ref{Prop1}. 
\begin{proof}[Proof of Proposition \ref{Prop1}] 
Let $F\in M_k(\varGamma  _0^{(n)}(p))$. From Lemma \ref{Lem1}, we have 
\begin{align*}
F|\varGamma  _0^{(n)}(p)\begin{pmatrix}O_n & -1_n \\ 1_n & O_n \end{pmatrix}\varGamma  _0^{(n)}(p)&=
\sum_TF|_k \begin{pmatrix}O_n & -1_n \\ 1_n & T \end{pmatrix}\\
&=\sum _TF|W_p|_k\begin{pmatrix}1_n & T \\ O_n & p1_n \end{pmatrix},
\end{align*}
where $W_p$ is the Fricke involution
$$
F\longmapsto F|W_p:=F|_k\begin{pmatrix}O_n & -1_n \\ p1_n & O_n\end{pmatrix}.
$$
We see by the usual way that $F|W_p\in M_k(\varGamma _0^{(n)}(p))$. If we write $G=F|W_p=\sum _Ha_G(H)q^H$, then
\begin{align*}
\sum_TF|W_p|_k\begin{pmatrix}1_n & T \\ O_n & p1_n \end{pmatrix}&=\sum _TG|_k\begin{pmatrix}1_n & T \\ O_n & p1_n \end{pmatrix}\\ 
&=\sum _H\left(\sum _Te^{\frac{2\pi i}{p}\tau(H,T)}\right)a_G(H)e^{\frac{2\pi i}{p}\tau(H,Z)}\\
&=c\cdot G|U(p), 
\end{align*}
where $c:=\sharp Her _n({\mathcal O})/pHer _n({\mathcal O})$ and the last equality follows from the following lemma.
\begin{Lem}
\label{Lem2}
\it{
For fixed $H\in Her _n^\tau ({\mathcal O})$, we have
\begin{align}
\label{sum} 
\sum _Te^{\frac{2\pi i}{p}\tau (H,T)}=\begin{cases} 0 \quad &{\rm if}\quad H\not \in pHer^\tau _n({\mathcal O}), \\ c \quad &{\rm if}
\quad H\in pHer^\tau _n({\mathcal O}). \\ \end{cases}
\end{align}
}
\end{Lem}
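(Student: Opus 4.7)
The plan is to recognize the left-hand side as a character sum on the finite abelian group $A := Her_n(\mathcal{O})/pHer_n(\mathcal{O})$ (an $\mathbb{F}_p$-vector space of order $c$) and then invoke orthogonality of characters.

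First I would define, for fixed $H\in Her_n^\tau (\mathcal{O})$, the map
$$
\chi_H : Her_n(\mathcal{O})\longrightarrow \mathbb{C}^\times,\qquad T\longmapsto e^{\frac{2\pi i}{p}\tau(H,T)}.
$$
Bilinearity of the reduced trace form $\tau$ makes $\chi_H$ a homomorphism. To see that it descends to $A$, I would use the defining property of the dual lattice $Her_n^\tau(\mathcal{O})$: namely, $\tau(H,S)\in\mathbb{Z}$ for every $S\in Her_n(\mathcal{O})$. Then for $T'=T+pS$ one has $\tau(H,T')/p-\tau(H,T)/p=\tau(H,S)\in\mathbb{Z}$, so $\chi_H(T)$ depends only on the class of $T$ in $A$ and the sum over a system of representatives is well defined.

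Next I would apply orthogonality of characters on $A$: the sum $\sum_{T\in A}\chi_H(T)$ equals $|A|=c$ when $\chi_H$ is trivial, and $0$ otherwise. Triviality of $\chi_H$ on $A$ is equivalent to $\tau(H,T)\in p\mathbb{Z}$ for every $T\in Her_n(\mathcal{O})$, i.e.\ $\tau(p^{-1}H,T)\in\mathbb{Z}$ for every such $T$. By the defining property of the $\tau$-dual lattice, this is precisely the condition $p^{-1}H\in Her_n^\tau (\mathcal{O})$, equivalently $H\in pHer_n^\tau (\mathcal{O})$, which gives the stated dichotomy.

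The only subtle point is the well-definedness step: one must match the ``integrality on $Her_n(\mathcal{O})$'' criterion with the definition of the $\tau$-dual $Her_n^\tau (\mathcal{O})$ from \cite{Kri2}. Once that is in hand, the rest is a two-line application of character orthogonality, and no further computation is required.
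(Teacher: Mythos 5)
Your proposal is correct and is essentially the paper's own argument: the paper's step of replacing $T$ by $T+S$ to get $G(H)=G(H)e^{\frac{2\pi i}{p}\tau(H,S)}$ is exactly the standard proof of the character-orthogonality fact you invoke, and both arguments then translate triviality of the character into $\frac{1}{p}H\in Her_n^\tau(\mathcal{O})$ via the definition of the dual lattice. Your explicit check that $\chi_H$ descends to the quotient is a point the paper passes over with only the remark that the sum is independent of the choice of representatives, but it is the same underlying observation.
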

\begin{proof}[Proof of Lemma \ref{Lem2}] 
For $H\in Her_n^\tau ({\mathcal O})$, we define
\begin{align*}
G(H):=\sum _{T\in Her_n({\mathcal O})/pHer_n({\mathcal O})}e^{\frac{2\pi i}{p}\tau (H,T)}. 
\end{align*}
This definition is independent of the choice of the representation $T$. Replacing $T$ by $T+S$, we obtain 
$$
G(H)=G(H)e^{\frac{2\pi i}{p}\tau (H,S)}.
$$
Hence, $G(H)=0$ unless $e^{\frac{2\pi i }{p}\tau (H,S)}=1$; i.e., $\tau (H,S)\in p\mathbb{Z}$. This implies 
$\tau (\frac{1}{p}H,S)\in \mathbb{Z}$ 
for all $S\in Her_n ({\mathcal O})$. The definition of a dual lattice yields 
$$
\frac{1}{p}H\in Her_n^\tau ({\mathcal O}). 
$$
\end{proof}
From this lemma, we have
$$
F|\varGamma  _0^{(n)}(p)\begin{pmatrix}O_n & -1_n \\ 1_n & O_n \end{pmatrix}\varGamma  _0^{(n)}(p)=c\cdot F|W_p|U(p). 
$$
Hence, the action of $U(p)$ is described by the action of the double coset 
$$
\varGamma  _0^{(n)}(p)\begin{pmatrix}1_n & O_n \\ O_n & p1_n\end{pmatrix}\varGamma  _0^{(n)}(p).
$$
Therefore, we have $F|U(p)\in M_k(\varGamma  _0^{(n)}(p))$, which completes the proof of
Proposition \ref{Prop1}.
\end{proof} 
\begin{Rem}
The proof of Lemma \ref{Lem2} is due to Krieg.
\end{Rem}
\section{Main results}
\subsection{Modularity of $p$-adic Eisenstein series}
In this subsection, we deal with a suitable constant multiple of 
the normalized quaternionic Eisenstein series 
$$
G_k=G^{(2)}_k:=(2^{k-2}-1)\frac{B_kB_{k-2}}{4k(k-1)}E_k^{(2)}
$$
and show that certain $p$-adic limits of this Eisenstein series are ``real" modular forms for $\varGamma  _0^{(2)}(p)$. 

We write the Fourier expansion of $G_k$ as $G_k=\sum _{H}b_k(H)q^H$. We remark that
\[b_k(O_2)=(2^{k-2}-1)\frac{-B_kB_{k-2}}{4k(k-2)}.\]

For an odd prime $p$ we put 
\begin{align*}
G^*_k&:=\frac{-1}{1+p^{k-3}}\left\{p^{2(k-3)}(G_k|U(p)-p^{k-1}G_k)-(G_k|U(p)-p^{k-1}G_k)|U(p)\right\}\\
&\in M_k(\varGamma  _0^{(n)}(p)), 
\end{align*}
where this modularity follows from Proposition \ref{Prop1}. The first main theorem is
\begin{Thm}
\label{ThmM}
\it{Let $p$ be an odd prime and $k$ an even integer with $k\ge 4$. Define a sequence 
$\{k_m\}$ by 
$$k_m:=k+(p-1)p^{m-1}. $$
Then, the corresponding sequence of Eisenstein series $\{G_{k_m}\}$ has a $p$-adic limit 
$G^*_k$ and we have
\begin{align}
\label{Lim}
\lim _{m\to \infty }G_{k_m}=G^*_k\in M_k(\varGamma  _0^{(2)}(p)).
\end{align}
}
\end{Thm}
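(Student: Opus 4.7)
The plan is to verify the $p$-adic limit coefficient by coefficient in the Fourier expansion. Combining Krieg's formula (Theorem \ref{Four}) with the normalization $(2^{k-2}-1)B_kB_{k-2}/(4k(k-2))$ defining $G_k$ collapses the Bernoulli factors against the denominator of $\alpha^*$ in three different ways. For positive-definite $H$ the Bernoulli factors cancel completely, leaving
\[
b_k(H) = -\sum_{d\mid \varepsilon(H)} d^{k-1}\bigl[\sigma_{k-3}(2\det H/d^2) - 2^{k-2}\sigma_{k-3}(\det H/(2d^2))\bigr];
\]
for rank-one $H$ one has $b_k(H) = -\tfrac{(2^{k-2}-1)B_{k-2}}{2(k-2)}\sigma_{k-1}(\varepsilon(H))$; and for $H=O_2$ the constant term is as displayed after the definition of $G_k$. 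A short calculation shows $(1+p^{k-3})\,G^*_k = (U(p)-p^{k-1})(U(p)-p^{2(k-3)})G_k$, so
\[
b^*_k(H) = (1+p^{k-3})^{-1}\bigl\{b_k(p^2H) - (p^{k-1}+p^{2k-6})\,b_k(pH) + p^{3k-7}\,b_k(H)\bigr\}.
\]

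For positive-definite $H$ the convergence is a purely arithmetic matter. By Fermat's little theorem $d^{k_m-k}\to 1$ in $\mathbb{Z}_p$ whenever $p\nmid d$, while $d^{k_m}\to 0$ whenever $p\mid d$ since $k_m\to\infty$. The same dichotomy applied termwise to the inner divisor sums $\sigma_{k_m-3}(\cdot)$ shows that each $b_{k_m}(H)$ converges to the doubly ``$p$-depleted'' version of the formula above, in which both the outer sum over $d\mid\varepsilon(H)$ and the inner sums $\sigma_{k-3}$ are restricted to divisors coprime to $p$, and $2^{k_m-2}\to 2^{k-2}$. The theorem in this case reduces to the explicit combinatorial identity that this depleted sum equals $b^*_k(H)$: the factor $U(p)-p^{k-1}$ eliminates $p$-divisible contributions inside the $\sigma$'s, the factor $U(p)-p^{2(k-3)}$ eliminates them from the outer $d\mid\varepsilon(H)$ summation, and the denominator $1+p^{k-3}$ supplies the correct normalizing Euler factor.

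For rank-one $H$ and the constant term the additional Bernoulli prefactors require Kummer's congruence: $(1-p^{k-3})B_{k-2}/(k-2)$ and $(1-p^{k-1})B_k/k$ depend $p$-adically continuously on $k$ along the progression $k_m\equiv k\pmod{(p-1)p^{m-1}}$, and their limits generate precisely the Euler-type corrections produced by the same quadratic $U(p)$-combination on $b^*_k(H)$. I expect the main obstacle to be the rank-two bookkeeping: rearranging the three sums $b_k(H), b_k(pH), b_k(p^2H)$, using $\varepsilon(pH)=p\,\varepsilon(H)$ and $\det(pH)=p^2\det(H)$, and checking that all contributions indexed by $p$-divisible divisors cancel cleanly in the quadratic numerator while the residual combinatorial factor matches $1+p^{k-3}$. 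Once the coefficient-wise identity $\lim_m b_{k_m}(H)=b^*_k(H)$ is established for every $H$, Proposition \ref{Prop1} guarantees $G^*_k\in M_k(\varGamma_0^{(2)}(p))$, and the termwise $p$-adic convergence upgrades to \eqref{Lim}.
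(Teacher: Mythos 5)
Your proposal follows essentially the same route as the paper: expand $G^*_k$ as the quadratic combination $(U(p)-p^{k-1})(U(p)-p^{2(k-3)})G_k$ normalized by $1+p^{k-3}$, compute its Fourier coefficients from Krieg's formula by rank, and match them against $\lim_m b_{k_m}(H)$ using Fermat's little theorem for the divisor sums and the Kummer congruences for the Bernoulli factors, with Proposition \ref{Prop1} supplying the modularity. The rank-two ``bookkeeping'' you defer is exactly what the paper carries out via Lemma \ref{Lem0} and the identity $p^{2m}\sigma_m(N)-\sigma_m(p^2N)=-(1+p^m)\sigma^*_m(N)$, confirming the depletion mechanism you describe.
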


\begin{proof}
The proof of (\ref{Lim}) is reduced to show that $G_k^*$ is obtained by
removing all $p$-factors of the Fourier coefficients of the quaternionic Eisenstein series.

To calculate the Fourier coefficients of $G^*_k$, we set 
$$
F_k=G_k|U(p)-p^{k-1}G_k. 
$$
We can then rewrite $G^*_k$ as 
$$
G^*_k=\frac{-1}{1+p^{k-3}}(p^{2(k-3)}F_k-F_k|U(p)). 
$$
We write the Fourier expansions as 
$$
G^*_k=\sum _HA_k(H)q^H,\quad F_k=\sum _HB_k(H)q^H.  
$$
First, we calculate the constant term of $G^*_k$. 
Since
$$
b_k(O_2)=(2^{k-2}-1)\frac{-B_kB_{k-2}}{4k(k-2)},
$$
the constant term of $G^*_k$ becomes
\begin{align*}
A_k(O_2)&=\frac{-1}{1+p^{k-3}}\{p^{2(k-3)}(b_k(O_2)-p^{k-1}b_k(O_2))-(b_k(O_2)-p^{k-1}b_k(O_2))\}\\
&=(1-p^{k-1})(1-p^{k-3})(2^{k-2}-1)\frac{-B_kB_{k-2}}{4k(k-2)}. 
\end{align*}

Second, we calculate the coefficient $A_k(H)$ for $H$ with ${\rm rank}(H)=1$. 
\begin{align*}
B_k(H)&=b_k(pH)-p^{k-1}b_k(H)\\
&=(2^{k-2}-1)\frac{B_{k-2}}{2(k-2)}\left(\sum_{0<d|p\varepsilon (H)}d^{k-1}-p^{k-1}\sum_{0<d|\varepsilon (H) }d^{k-1}\right)\\
&=(2^{k-2}-1)\frac{B_{k-2}}{2(k-2)}\sigma _{k-1}^*(\varepsilon (H)),
\end{align*}
where $\sigma _{m}^*(N)$ is defined as 
$$
\sigma _{m}^*(N):=\sum _{\substack{0<d|N \\ (p,d)=1}}d^{m}. 
$$
Note that $B_k(pH)=B_k(H)$ when ${\rm rank}(H)=1$. Hence, we have 
$$
A_k(H)=(1-p^{k-3})(2^{k-2}-1)\frac{B_{k-2}}{2(k-2)}\sigma _{k-1}^*(\varepsilon (H)).  
$$

Finally, we consider the case ${\rm rank}(H)=2$.  
\begin{align*}
B_k(H)&=b_k(pH)-p^{k-1}b_k(H)\\
&=\sum _{0<d| p\varepsilon (H)}d^{k-1}[\sigma _{k-3}\left(\tfrac{2p^2\det H}{d^2}\right)
-2^{k-2}\sigma _{k-3}\left(\tfrac{2p^2\det H}{4d^2}\right)]\\
&-p^{k-1}\sum _{0<d| \varepsilon (H)}d^{k-1}[\sigma _{k-3}
\left(\tfrac{2\det H}{d^2}\right)-2^{k-2}\sigma _{k-3}\left(\tfrac{2\det H}{4d^2}\right)]\\
&=\sum _{\substack{ 0<d|\varepsilon (H) \\ (p,d)=1}}d^{k-1}[\sigma _{k-3}
\left(\tfrac{2p^2\det H}{d^2}\right)-2^{k-2}\sigma _{k-3}\left(\tfrac{2p^2\det H}{4d^2}\right)].
\end{align*}
Here, the last equality was obtained from the elemental property: 
\begin{Lem}
\label{Lem0}
\it{
Let $p$ be a prime and $N$ a positive integer. For a function $f:\mathbb{N}\rightarrow \mathbb{N}$, the following holds:
\begin{align*}
\sum _{0<d|pN}f(d)=\sum _{\substack{ 0<d|N \\ (p,d)=1}}f(d)+\sum _{0<d|N}f(pd). 
\end{align*}
} 
\end{Lem}
Therefore, 
\begin{align*}
A_k(H)&=\frac{-1}{1+p^{k-3}}(p^{2(k-3)}B_k(pH)-B_k(H))\\
&=\frac{-1}{1+p^{k-3}}\Big( p^{2(k-3)}\sum _{\substack{ 0<d|\varepsilon (H) \\ (p,d)=1}}d^{k-1}
[\sigma _{k-3}\left(\tfrac{2p^2\det H}{d^2}\right)-2^{k-2}\sigma _{k-3}\left(\tfrac{2p^2\det H}{4d^2}\right)] \\ 
&-\sum _{\substack{ 0<d|\varepsilon (H) \\ (p,d)=1}}d^{k-1}[\sigma _{k-3}\left(\tfrac{2p^4\det H}{d^2}\right)-2^{k-2}
\sigma _{k-3}\left(\tfrac{2p^4\det H}{4d^2}\right)] \Big).
\end{align*}
By repeatedly applying Lemma \ref{Lem0}, we obtain
$$
p^{2m}\sigma _{m}(N)-\sigma _{m}(p^2N)=-(1+p^{m})\sum _{\substack{ 0<d|N \\ (p,d)=1}}d^m. 
$$
From this, we have 
$$
A_k(H)=\sum _{\substack{ 0<d|\varepsilon (H) \\ (p,d)=1}}d^{k-1}[\sigma ^*_{k-3}\left(\tfrac{2\det H}{d^2}\right)-2^{k-2}\sigma ^*_{k-3}
\left(\tfrac{2\det H}{4d^2}\right)].
$$
Summarizing these calculations, we obtain the following formula: 
\begin{Prop}
\it{The following holds:} 
\begin{align*}
A_k(H)=\begin{cases}
\displaystyle (1-p^{k-1})(1-p^{k-3})(2^{k-2}-1)\frac{-B_kB_{k-2}}{4k(k-2)},\ & \text{if}\; H=O_2,
\vspace{2mm}\\
\displaystyle (1-p^{k-3})(2^{k-2}-1)\frac{B_{k-2}}{2(k-2)}\sigma _{k-1}^*(\varepsilon (H)),\ &
\text{if\;rank}(H)=1,
\vspace{2mm}\\ 
\displaystyle \sum _{\substack{ 0<d|\varepsilon (H) \\ (p,d)=1}}d^{k-1}[\sigma ^*_{k-3}\left(\tfrac{2\det H}{d^2}\right)-2^{k-2}\sigma ^*_{k-3}
\left(\tfrac{2\det H}{4d^2}\right)],\ &
\text{if\;rank}(H)=2. 
\end{cases}
\end{align*}
\end{Prop} 
On the other hand,
\begin{align*}
b_{k_m}(H)=
\begin{cases}
\displaystyle (2^{k_m-2}-1)\frac{-B_{k_m}B_{k_m-2}}{4k_m(k_m-2)},\ &
\text{if}\; H=O_2,
\vspace{2mm}\\
\displaystyle (2^{k_m-2}-1)\frac{B_{k_m-2}}{2(k_m-2)}\sigma _{k_m-1}(\varepsilon (H)),\ &
\text{if\;rank}(H)=1,
\vspace{2mm}\\ 
\displaystyle \sum _{0<d|\varepsilon (H)}d^{k_m-1}[\sigma_{k_m-3}
\left(\tfrac{2\det H}{d^2}\right)-2^{k_m-2}\sigma_{k_m-3}\left(\tfrac{2\det H}{4d^2}\right)],\ &
\text{if\;rank}(H)=2. 
\end{cases}
\end{align*}
Combining these formulas and the Kummer congruence, we can prove that
$$	
\lim _{m\to \infty }b_{k_m}(H)=A_k(H)
$$
for all $H\in Her_2^\tau(\mathcal{O})$. This completes the proof of Theorem \ref{ThmM}. 
\end{proof} 
\begin{Rem}
\it{ 
Following Hida \cite{Hida}, our $G_k^*$ can be p-adic analytically interpolated with respect to the weight.
}
\end{Rem}

\subsection{Transcendental $p$-adic Eisenstein series}

As we have seen in the previous section, under certain conditions, a $p$-adic limit of a quaternionic 
Eisenstein series becomes a ``real" modular form
with rational Fourier coefficients. This also holds for Siegel Eisenstein and Hermitian 
Eisenstein series. More precisely, they coincide with the genus theta series 
(cf. \cite{KN}, \cite{N}). In these cases (Siegel, Hermitian cases), the $p$-adic Eisenstein 
series is algebraic. We shall show that there exists an example of a transcendental 
$p$-adic Eisenstein series for quaternionic modular forms. 

The second main theorem is
\begin{Thm}
\label{ThmM2}
\it{
Let $p$ be an odd prime and $\{k_m\}$ the sequence defined by 
$$k_m:=2+(p-1)p^{m-1}.$$
Then, the $p$-adic Eisenstein series $\displaystyle\widetilde{E}=\lim_{m\to\infty}E_{k_m}^{(2)}$ 
is transcendental;
namely, $\widetilde{E}$ has transcendental coefficients
where $E_k^{(2)}$ is the normalized quaternionic Eisenstein series of degree 2 defined 
in (\ref{Eisen}).
}
\end{Thm}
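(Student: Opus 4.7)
The plan is to compute the $p$-adic limit of the Fourier expansion of $E_{k_m}^{(2)}$ coefficient-by-coefficient via Krieg's formula (Theorem \ref{Four}), to isolate a rank-$2$ Fourier coefficient of the limit that is a nonzero rational multiple of $1/\log_p 2$, and then to invoke the classical transcendence of the $p$-adic logarithm of a rational integer. The underlying idea is that as $k \to 2$ $p$-adically, three quantities in Krieg's formula degenerate simultaneously, and balancing them forces the $p$-adic exponential, and hence $\log_p 2$, to appear in the limit.

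First I would dispose of the constant term and all rank-$1$ coefficients, showing that each has a rational $p$-adic limit. The constant term equals $1$ for every $m$. Each rank-$1$ coefficient is $-\tfrac{2k_m}{B_{k_m}}\sigma_{k_m-1}(\varepsilon(H))$ by Theorem \ref{Four}; using the Kubota--Leopoldt $p$-adic zeta function one shows $B_{k_m}/k_m \to (1-p)B_2/2 = (1-p)/12 \in \mathbb{Q}$ (together with $p^{k_m-1}\to 0$ $p$-adically), while Fermat--Euler gives $\sigma_{k_m-1}(N) \to \sigma^*_1(N) := \sum_{d \mid N,\, p \nmid d} d \in \mathbb{Q}$. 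Hence any transcendence must live in the rank-$2$ Fourier coefficients.

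Second, for positive definite $H$ the formula reads
\[
a_{k_m}(H) \;=\; \sum_{0 < d \mid \varepsilon(H)} d^{\,k_m - 1}\, \alpha^*\bigl(2\det H/d^2\bigr),
\]
whose common prefactor inside $\alpha^*$ is $-\tfrac{4 k_m (k_m - 2)}{(2^{k_m-2}-1)\, B_{k_m}\, B_{k_m-2}}$. Three quantities here simultaneously degenerate as $m\to\infty$: $k_m - 2 = (p-1)p^{m-1} \to 0$; $2^{k_m-2} - 1 \to 0$ by Fermat--Euler; and $B_{k_m - 2}$, whose index is divisible by $p-1$, has denominator divisible by $p$ by von Staudt--Clausen. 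I would evaluate the three limits separately. Writing $2^{k_m-2} = \exp_p((k_m-2)\log_p 2)$ yields $(2^{k_m-2} - 1)/(k_m-2) \to \log_p 2$. The residue of the trivial-character Kubota--Leopoldt $L$-function at $s = 1$, combined with the interpolation formula $L_p(1 - n, \mathbf{1}) = -(1 - p^{n-1})B_n/n$ applied at $n = k_m - 2$, yields $(1 - p^{k_m - 3}) B_{k_m - 2} \to (p-1)/p$ and hence $B_{k_m - 2} \to (p-1)/p$. The limit of $B_{k_m}/k_m$ is as in the previous paragraph. Combining all of this, the displayed prefactor converges to an explicit nonzero element of $\mathbb{Q} \cdot (\log_p 2)^{-1}$.

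Finally, Fermat--Euler gives $\sigma_{k_m - 3}(\ell) \to \sigma^*_{-1}(\ell)$ and $d^{k_m - 1} \to d$ for $p \nmid d$ (and $\to 0$ otherwise), so every rank-$2$ Fourier coefficient of $\widetilde{E}$ lies in $\mathbb{Q} \cdot (\log_p 2)^{-1}$. Choosing $H$ with $\varepsilon(H) = 1$ and $2\det H = 1$ collapses the outer sum to the single term $d = 1$ and leaves $\sigma_{k_m - 3}(1) = 1$, $\sigma_{k_m - 3}(1/4) = 0$, so $\lim_m a_{k_m}(H) = c/\log_p 2$ for an explicit $c \in \mathbb{Q}^\times$. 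Since $\log_p 2$ is transcendental over $\mathbb{Q}$ (Mahler), this single Fourier coefficient of $\widetilde{E}$ is transcendental, which completes the proof. The main obstacle is the triple degeneracy in the prefactor of $\alpha^*$: evaluating each of the three $0/0$ or pole limits requires the $p$-adic exponential together with the pole of the trivial-character $p$-adic $L$-function at $s = 1$, and one must verify that $\log_p 2$ does not accidentally cancel against the Bernoulli contributions.
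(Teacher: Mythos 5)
Your proposal is correct and follows essentially the same route as the paper: both evaluate the limit of the rank-$2$ Fourier coefficient with $\varepsilon(H)=1$, $2\det H=1$ via Krieg's formula, split the degenerating prefactor into the three limits (Kummer congruence for $B_{k_m}/k_m$, the pole of $L_p(s,\chi^0)$ for $B_{k_m-2}$, and Leopoldt's formula / the $p$-adic exponential for $(2^{k_m-2}-1)/(k_m-2)$), and conclude transcendence of the resulting multiple of $1/\log_p 2$ by Mahler's theorem. The only differences are cosmetic (you also spell out the rationality of the rank $\le 1$ coefficients, which the paper defers to a remark).
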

\begin{proof}
We calculate $\tilde{a}(H):=\displaystyle\lim_{m\to\infty}a_{k_m}(H)$ at 
$H=\begin{pmatrix}1&\tfrac{e_1+e_2}{2}\\\tfrac{e_1-e_2}{2}&1\end{pmatrix}\in Her_2^\tau (\mathcal{O})$.
The convergence for general $H$ is proved similarly. 

It follows from Theorem \ref{Four} that
$$
a_{k_m}(H)=-\frac{4k_m(k_m-2)}{(2^{k_m-2}-1)B_{k_m}B_{k_m-2}}.
$$
(We note that $\varepsilon (H)=1$ and $\det (H)=\frac{1}{2}$.) 
We rewrite the right-hand side as
$$
-4\cdot \frac{2+(p-1)p^{m-1}}{B_{2+(p-1)p^{m-1}}}\cdot
\frac{1}{B_{(p-1)p^{m-1}}}\cdot
\frac{p^m}{2^{(p-1)p^{m-1}}-1}\cdot
\frac{p-1}{p}
$$
and calculate the $p$-adic limit separately:\\
(i)\; $\displaystyle\lim_{m\to\infty}\frac{2+(p-1)p^{m-1}}{B_{2+(p-1)p^{m-1}}}=\frac{B_2}{2}=\frac{1}{12}$.\\
This is a consequence of the Kummer congruence.\\
(ii)\; $\displaystyle\lim_{m\to\infty}B_{(p-1)p^{m-1}}=\frac{p-1}{p}$.\\
This identity comes from the fact that the residue of the $p$-adic $L$-function $L_p(s,\chi^0)$ at
$s=0$ is just $1-\frac{1}{p}$.\\
(iii)\; $\displaystyle\lim_{m\to\infty}\frac{2^{(p-1)p^{m-1}}-1}{p^m}=\frac{\log_p(2^{p-1})}{p}$.\\
where $\log_p$ is the $p$-adic logarithmic function defined by
$$
\log_p(x)=x-\frac{x^2}{2}+\frac{x^3}{3}-\cdots ,\qquad (|x|_p<1).
$$
Leopoldt's formula \cite{L} states that
$$
\lim_{m\to\infty}\frac{x^{p^m}-1}{p^m}=\log_p(x).
$$
if $|x-1|_p<1$ .
This implies that
$$
\lim_{m\to\infty}\frac{2^{(p-1)p^{m-1}}-1}{p^m}=\frac{1}{p}\cdot\log_p(2^{p-1}).
$$
Combining these formulas, we obtain
\begin{equation}
\tilde{a}(H)=\lim_{m\to\infty}a_{k_m}(H)=\frac{-48p}{\log_p(2^{p-1})}.
\end{equation}
We shall show that $\log_p(2^{p-1})$ is transcendental. Let $\text{exp}_p$ be the $p$-adic exponential function defined by
$$
\text{exp}_p(x)=1+x+\frac{x^2}{2!}+\frac{x^3}{3!}+\cdots,\qquad (|x|_p<p^{-\frac{1}{p-1}}).
$$
It is known that if $|x|_p<p^{-\frac{1}{p-1}}$, then
$$
\text{exp}_p(\log_p(1+x))=1+x,\quad (\text{e.g., \cite{G}}).
$$
To prove the transcendency of $\log_p(2^{p-1})$, we use the following theorem by Mahler:
\begin{Thm}[Mahler \cite{M}]\it{
Let $\mathbb{C}_p$ be the completion of the algebraic closure of $\mathbb{Q}_p$.
For any algebraic over $\mathbb{Q}$ $p$-adic number
$\alpha\in\mathbb{C}_p$ with
$0<|\alpha|_p<p^{-\frac{1}{p-1}}$, the quantity
$\text{exp}_p(\alpha)$ is transcendental.}
\end{Thm}
We note that $|x|_p<p^{-\frac{1}{p-1}}$ is equivalent to $|x|_p<1$ for odd prime $p$
(e.g., \cite{G}, p.114). We put $\alpha=2^{p-1}-1$. Since $|\alpha|_p<1$,
we have
$$
\text{exp}_p(\log_p(1+\alpha))=1+\alpha=2^{p-1}.
$$
The right-hand side is obviously algebraic. Hence, by Mahler's theorem,
$\log_p(1+\alpha)=\log_p(2^{p-1})$ must be transcendental. Thus, we can
prove the transcendency of $\tilde{a}(H)$ at
$H=\begin{pmatrix}1&\frac{e_1+e_2}{2}\\ \frac{e_1-e_2}{2}&1\end{pmatrix}$.
This completes the proof of Theorem \ref{ThmM2}. 
\end{proof}

\begin{Rem}
By the above proof, we see that all coefficients $\tilde{a}(H)$ corresponding to
$H$ with rank $2$ are transcendental. However, $\tilde{a}(H)$ for $H$ with
$\text{rank}(H)\leq 1$ are rational.
\end{Rem}

\noindent
\textbf{Acknowledgments:}
We would like to thank Professor A. Krieg for helpful comments on the proof of
the modularity of $f|U(p)$. 
We also thank Professor M.~Amou for pointing out the transcendency of 
$\log_p(2^{p-1})$.

Toshiyuki {\scshape Kikuta}\\
Department of Mathematics\\
Interdisciplinary Graduate School of\\
Science and Engineering
Kinki University\\
Higashi-Osaka 577-8502, Japan\\
E-mail:\; kikuta84@gmail.com\\
\\
\\
Shoyu {\scshape Nagaoka}\\
Department of Mathematics\\
School of Science and Engineering\\
Kinki University\\
Higashi-Osaka 577-8502, Japan\\
E-mail:\; nagaoka@math.kindai.ac.jp

\end{document}